\newtheorem{theorem}{Theorem}[section]
\newtheorem{corollary}[theorem]{Corollary}
\newtheorem{proposition}[theorem]{Proposition}
\newtheorem{lemma}[theorem]{Lemma}
\newtheorem*{theorem*}{Theorem}
\theoremstyle{definition}
\newtheorem{example}[theorem]{Example}
\def\sideremark#1{\ifvmode\leavevmode\fi\vadjust{
		\vbox to0pt{\hbox to 0pt{\hskip\hsize\hskip1em
				\vbox{\hsize3cm\tiny\raggedright\pretolerance10000
					\noindent #1\hfill}\hss}\vbox to8pt{\vfil}\vss}}}
\newcommand{\R}{{\mathbb R}}
\newcommand{\Z}{\mathbb Z}
\newcommand{\T}{{\mathbb T}}
\newcommand{\beq}{\begin{equation}}
\newcommand{\eeq}{\end{equation}}
\newcommand{\g}{\gamma}
\renewcommand{\l}{\lambda}
\renewcommand{\o}{\omega}
\newcommand{\psip}{\psi}
\newcommand{\psim}{\widehat{\psi}}
\newcommand{\SU}{{\mathrm{SU}}}
\newcommand{\SO}{{\mathrm {SO}}}
\newcommand{\G}{{\mathrm G}}
\newcommand{\K}{{\mathrm K}}
\newcommand{\W}{\wedge}
\newcommand{\Diff}{\mathrm{Dif{}f}}
\newcommand{\Iso}{\mathrm{Iso}}
\DeclareMathOperator\End{End}
\DeclareMathOperator\Aut{Aut}
\newcommand{\frg}{\mathfrak{g}}
\newcommand{\frk}{\mathfrak{k}}
\newcommand{\frso}{\mathfrak{so}}
\newcommand{\frsu}{\mathfrak{su}}
\renewcommand{\gg}{\mathfrak{g}}
\newcommand{\st}{\ |\ }
\newcommand{\sst}{\scriptscriptstyle}
\numberwithin{equation}{section}
\title[On the automorphism group of a symplectic half-flat 6-manifold]{On the automorphism group of a symplectic half-flat 6-manifold}
\author{Fabio Podest\`a and Alberto Raffero}
\subjclass[2010]{53C10, 57S15}
\keywords{SU(3)-structure, automorphism group, cohomogeneity one action}
\thanks{The authors were supported by GNSAGA of INdAM}
\address{Dipartimento di Matematica e Informatica ``U.~Dini'' \\ Universit\`a degli Studi di Firenze\\ Viale Morgagni 67/a\\ 50134 Firenze\\ Italy}
\email{podesta@math.unifi.it, alberto.raffero@unifi.it}
\begin{document}
\begin{abstract}
We prove that the automorphism group of a compact 6-manifold $M$ endowed with a symplectic half-flat SU(3)-structure has abelian Lie algebra 
with dimension bounded by $\mathrm{min}\{5,b_1(M)\}$. Moreover, we study the properties of the automorphism group action and we discuss relevant examples. 
In particular, we provide new complete examples on $T\mathbb{S}^3$ which are invariant under a cohomogeneity one action of $\SO(4)$.
\end{abstract}

\maketitle

\section{Introduction}
An $\SU(3)$-structure on a six-dimensional smooth manifold $M$ is the data of an almost Hermitian structure $(g,J)$ with fundamental 2-form $\omega\coloneqq g(J\cdot,\cdot)$ 
and a complex volume form $\Psi = \psi+ i\,\psim \in\Omega^{3,0}(M)$ such that
\begin{equation}\label{normcond}
\psi\W\psim = \frac23\,\omega^3.
\end{equation}

By \cite{Hit}, the whole data $(g,J,\Psi)$ is completely determined by the real 2-form $\omega$ and the real 3-form $\psi$, provided that they satisfy suitable conditions 
(see $\S$\ref{sect3} for more details). 

An $\SU(3)$-structure $(\omega,\psi)$ is said to be {\em symplectic half-flat} if both $\omega$ and $\psi$ are closed. 
In this case, the intrinsic torsion can be identified with a unique real $(1,1)$-form $\sigma$ which is primitive with respect to $\omega$, i.e., 
$\sigma\W\omega^2=0$, and fulfills $d\psim=\sigma\W\omega$ (see e.g.~\cite{ChSa}).  
This $\SU(3)$-structure is {\em half-flat} according to \cite[Def.~4.1]{ChSa}, namely $d(\omega^2)=0$ and $d\psi=0$, 
and the corresponding almost complex structure $J$ is integrable if and only if $\sigma$ vanishes identically. 
When this happens, $(M,\omega,\psi)$ is a {\em Calabi-Yau} 3-fold. Otherwise, the symplectic half-flat structure is said to be {\em strict}. 

In recent years, symplectic half-flat structures turned out to be of interest in supersymmetric string theory. 
For instance, in \cite{FiUg} the authors proved that supersymmetric flux vacua with constant intermediate $\SU(2)$-structure \cite{And} 
are related to the existence of special classes of half-flat structures on the internal 6-manifold. 
In particular, they showed that solutions of Type IIA SUSY equations always admit a symplectic half-flat structure.  
In \cite{LTY}, the definition of symplectic half-flat structures, which are called supersymmetric of Type IIA, is generalized in higher dimensions,  
and it is proved that semi-flat supersymmetric structures of Type IIA correspond to semi-flat supersymmetric structures of Type IIB via the SYZ and Fourier-Mukai 
transformations.

In mathematical literature, symplectic half-flat structures were first introduced and studied in \cite{DeB} and then in \cite{DeBTom0}, 
while explicit examples were exhibited in \cite{CoTo,DeBTom,FMOU,PoRa,ToVe}. 
Most of them consist of simply connected solvable Lie groups endowed with a left-invariant symplectic half-flat structure. 
Moreover, in \cite{FMOU} it was proved that every six-dimensional compact solvmanifold with an invariant symplectic half-flat structure also admits a solution of  
Type IIA SUSY equations.  

Let $M$ be a 6-manifold endowed with a strict symplectic half-flat structure $(\omega,\psi)$. 
In the present paper, we are interested in studying the properties of the automorphism group 
$\Aut(M,\omega,\psi) \coloneqq \left\{f\in\Diff(M)\st f^*\omega=\omega,~f^*\psi=\psi \right\}$, 
aiming at understanding how to construct non-trivial examples with high degree of symmetry. 

In \cite{PoRa}, we proved the non-existence of compact homogeneous examples and we classified all non-compact cases which are homogeneous under the action 
of a semisimple Lie group of automorphisms. 
Here, in Theorem \ref{MainThm} we show that the Lie algebra of $\Aut(M,\omega,\psi)$ is abelian with dimension bounded by $\mathrm{min}\{5,b_1(M)\}$ 
whenever $M$ is compact. 
This allows to obtain a direct proof of the aforementioned non-existence result. 
In the same theorem, we also provide useful information on geometric properties of the $\Aut^{\sst0}(M,\omega,\psi)$-action on the manifold, 
proving in particular that the automorphism group acts by cohomogeneity one only when $M$ is diffeomorphic to a torus.  
Some relevant examples are then discussed in order to show that the automorphism group can be non-trivial and that the upper bound on its dimension can be actually attained. 

As our previous result on non-compact homogeneous spaces suggests, the non-compact ambient might provide a natural setting where looking for new examples. 
In section \ref{sect3}, we obtain new complete examples of symplectic half-flat structures on the tangent bundle $T\mathbb{S}^3$ which are invariant 
under the natural cohomogeneity one action of $\SO(4)$. These include also the well-known Calabi-Yau example constructed by Stenzel \cite{Ste}.

\section{The automorphism group}
Let $M$ be a six-dimensional manifold endowed with an $\SU(3)$-structure $(\omega,\psi)$. 
The {\em automorphism group} of $(M,\omega,\psi)$ consists of the diffeomorphisms of $M$ preserving the $\SU(3)$-structure, namely
\[
\Aut(M,\omega,\psi) \coloneqq \left\{f\in\Diff(M)\st f^*\omega=\omega,~f^*\psi=\psi \right\}.
\]
Clearly, $\Aut(M,\omega,\psi)$ is a closed Lie subgroup of the isometry group $\Iso(M,g)$, 
as every automorphism preserves the Riemannian metric $g$ induced by the pair $(\omega,\psip)$. 
The Lie algebra of the identity component $\G \coloneqq \Aut^{\sst0}(M,\omega,\psi)$ is 
\[
\gg=\left\{X\in\mathfrak{X}(M) \st \mathcal{L}_{\sst X}\omega=0,~\mathcal{L}_{\sst X}\psi=0 \right\}, 
\]
and every $X\in\gg$ is a Killing vector field for the metric $g$. 
Moreover, the Lie group $\Aut(M,\omega,\psi)\subset\Iso(M,g)$ is compact whenever $M$ is compact. 

If $(M,\omega,\psi)$ is a Calabi-Yau 3-fold, i.e., if $\omega$, $\psi$ and $\psim$ are all closed, 
then the Riemannian metric $g$ is Ricci-flat and $\mathrm{Hol}(g)\subseteq\SU(3)$. 
When $M$ is compact and the holonomy group is precisely $\SU(3)$, it follows from Bochner's Theorem that $\Aut(M,\omega,\psi)$ is finite. 

We now focus on {\em strict symplectic half-flat} structures, namely $\SU(3)$-structures $(\omega,\psi)$ such that 
\[
d\omega=0,\quad d\psi=0,\quad d\psim=\sigma\W\omega,
\]
with $\sigma\in\left[\Omega^{\sst 1,1}_{\sst0}(M) \right]\coloneqq\left\{\kappa\in\Omega^{\sst2}(M) \st J\kappa=\kappa,~\kappa\W\omega^2=0 \right\}$ 
not identically vanishing.   
Notice that the condition on $\sigma$ is equivalent to requiring that the almost complex structure $J$ induced by $(\omega,\psi)$ is non-integrable (cf.~e.g.~\cite{ChSa}). 
In this case, we can show the following result. 

\begin{theorem}\label{MainThm}
Let $M$ be a compact six-dimensional manifold endowed with a strict symplectic half-flat structure $(\omega,\psi)$. 
Then, there exists an injective map
\[
\mathscr{F}:\gg\rightarrow\mathscr{H}^1(M),\quad X\mapsto \iota_{\sst X}\omega,
\]
where $\mathscr{H}^1(M)$ is the space of $\Delta_g$-harmonic 1-forms.  
Consequently, the following properties hold:
\begin{enumerate}[1)]
\item\label{thm1} $\dim(\gg)\leq b_1(M)$;
\item\label{thm2} $\gg$ is abelian with $\dim(\gg)\leq5$;
\item\label{thm3} for every $p\in M,$ the isotropy subalgebra $\gg_p$ has dimension $\dim(\gg_p)\leq2$. If $\dim(\gg_p)=2$ for some $p$, then $\G_p=\G$;
\item\label{thm4} the $\G$-action is free when $\dim(\gg)\geq4$. In particular, when $\dim(\gg) = 5$ the manifold $M$ is diffeomorphic to $\mathbb T^6$.
\end{enumerate}
\end{theorem}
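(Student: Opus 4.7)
The plan is first to establish the map $\mathscr F$, and then to exploit it to derive each of the four statements. The main input is the harmonicity of $\iota_X\omega$ for every $X\in\gg$: closedness $d\iota_X\omega=\cL_X\omega-\iota_Xd\omega=0$ is immediate, while for co-closedness I use the pointwise SU(3)-identity $*\alpha=\tfrac12\,J\alpha\W\omega^2$ valid for any $1$-form. This gives $*\iota_X\omega=-\tfrac12\,X^\flat\W\omega^2$ and hence $d*\iota_X\omega=-\tfrac12\,dX^\flat\W\omega^2$; the vanishing of this $6$-form is equivalent to the scalar identity $\mathrm{tr}(J\nabla X)=0$ (equivalently $\mathrm{div}(JX)=0$). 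I plan to extract it from $\cL_X\Psi=0$ (with $\Psi=\psi+i\,\psim$): its $(3,0)$-component combined with $d\psi=0$ and the preservation $\cL_X\sigma=0$ of the intrinsic torsion yields the identity. Injectivity of $\mathscr F:\gg\to\mathscr H^1(M)$ is then automatic from non-degeneracy of $\omega$, giving \eqref{thm1}.

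For abelianness in \eqref{thm2}, the short computation
\[
 \iota_{[X,Y]}\omega \;=\; \cL_X\iota_Y\omega-\iota_Y\cL_X\omega \;=\; d\,\iota_X\iota_Y\omega+\iota_X\,d\iota_Y\omega \;=\; d\bigl(\omega(Y,X)\bigr)
\]
exhibits $\iota_{[X,Y]}\omega$ as both harmonic (via $\mathscr F$) and exact, hence identically zero on compact $M$; non-degeneracy of $\omega$ then forces $[X,Y]=0$.

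For \eqref{thm3}, the evaluation $\gg_p\to\su(3)$, $X\mapsto\nabla X|_p$, is injective by classical rigidity of Killing fields and lands in $\su(3)$ because each $X\in\gg$ preserves $(g,J,\omega,\psi,\psim)$. Its image is an abelian subalgebra of the compact $\su(3)$, hence contained in a Cartan, giving $\dim\gg_p\le\mathrm{rank}\,\su(3)=2$. If $\dim\gg_p=2$ the image is a full Cartan, and the associated maximal torus of $\SU(3)$ acts on $T_pM\cong\C^3$ with trivial fixed subspace; since $\Ad|_\G$ is trivial ($\G$ abelian and connected), $dg_p(X(p))=X(p)$ for every $g\in\G_p$ and $X\in\gg$, so $X(p)=0$, whence $\gg_p=\gg$ and $\G_p=\G$.

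For \eqref{thm4} and the cap $\dim\gg\le5$, assume $\dim\gg\ge4$. The case $\dim\gg_p=2$ is excluded by the preceding paragraph, and if $\dim\gg_p=1$ the image is spanned by a non-zero $A\in\su(3)$ whose fixed subspace on $\C^3$ has real dimension at most $2$ (its three eigenvalues sum to zero and cannot all vanish). The same $\G_p$-invariance argument places every $X(p)$ in this subspace, so $\dim\gg\le\dim\gg_p+2\le3$: contradiction. Hence the action is free. If $\dim\gg=6$, freeness together with connectedness of $M$ force transitivity, so $M\cong\T^6$ with a translation-invariant SU(3)-structure; all invariant forms on $\T^6$ are closed, giving $d\psim=\sigma\W\omega=0$, and the Lefschetz operator $L=\omega\W(\cdot)$ being injective on primitive $(1,1)$-forms in complex dimension three forces $\sigma\equiv0$, contradicting strictness. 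Thus $\dim\gg\le5$. Finally, if $\dim\gg=5$ the quotient $M/\G$ is a closed connected $1$-manifold, hence $\cong\Su$, and the associated principal $\T^5$-bundle is trivial because $[\Su,B\T^5]=\pi_1(B\T^5)=0$; therefore $M\cong\T^5\times\Su\cong\T^6$. The main obstacle I anticipate is the harmonicity step for $\iota_X\omega$: unlike the Calabi--Yau case (where Bochner produces parallel Killing fields and hence harmonicity automatically), the strict setting requires genuine use of $d\psi=0$ and the torsion form $\sigma$.
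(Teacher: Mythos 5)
Your overall architecture is sound, and several of your micro-arguments are legitimate alternatives to the paper's (harmonic-plus-exact for abelianness instead of ``Killing fields preserve harmonic forms''; ruling out $\dim(\gg)=6$ via constancy of invariant forms on the torus and injectivity of the Lefschetz operator instead of flatness of invariant metrics; an explicit bundle-triviality argument over $\Su$ in place of the citation of Mostert). However, the central analytic step is not actually proved: the co-closedness of $\iota_{\sst X}\omega$. Your reduction $*(\iota_{\sst X}\omega)=-\tfrac12 X^\flat\W\omega^2$, hence $d*(\iota_{\sst X}\omega)=-\tfrac12\,dX^\flat\W\omega^2$, correctly turns the problem into the pointwise identity $\tr(J\nabla X)=0$, but the proposed derivation (``the $(3,0)$-component of $\cL_{\sst X}\Psi=0$ combined with $d\psi=0$ and $\cL_{\sst X}\sigma=0$'') is a plan, not an argument, and its stated ingredients are not the right ones: $\cL_{\sst X}\sigma=0$ plays no role. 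What you actually need is that for a symplectic half-flat structure the intrinsic torsion has no $1$-form components ($W_4=W_5=0$ in Chiossi--Salamon notation), equivalently that $\nabla_v\Psi$ has vanishing $(3,0)$-part for every $v$; only then does the $(3,0)$-component of $0=\cL_{\sst X}\Psi=\nabla_{\sst X}\Psi-(\nabla X)\cdot\Psi$ isolate the central $\mathfrak{u}(1)$-part of $\nabla X$ and give $\tr(J\nabla X)=0$. This can be completed, but it requires a torsion computation you have not supplied, and you flag it yourself as the main obstacle. The paper avoids all of this with a purely algebraic pointwise identity valid for any $\SU(3)$-structure, $\iota_{\sst X}\psi\W\psi=-2*(\iota_{\sst X}\omega)$ (Lemma \ref{prellem}); since $\cL_{\sst X}\psi=0$ and $d\psi=0$ give $d(\iota_{\sst X}\psi)=0$, co-closedness follows in one line, with no analysis of the torsion classes.

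There is a second, smaller gap in part \ref{thm4}): you exclude $\dim(\gg_p)\in\{1,2\}$ under $\dim(\gg)\geq4$ and then assert that the action is free, but this only shows the isotropy groups are finite. Freeness (trivial isotropy) is exactly what the statement claims, and your principal $\T^5$-bundle argument for $\dim(\gg)=5$ uses it. The missing step is the group-level version of the argument you already use infinitesimally, which is how the paper concludes: a nontrivial $h\in\G_p$ acts on $T_pM$ through an element of $\SU(3)$ that fixes $T_p(\G\cdot p)$ pointwise (because $\G$ is abelian), and once $\gg_p=0$ and $\dim(\gg)\geq4$ this tangent space has dimension at least four, whereas a nontrivial element of $\SU(3)$ fixes at most a complex line in $\C^3$, i.e.\ a real $2$-plane. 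Adding this observation, together with a genuine proof of $\tr(J\nabla X)=0$ (or the paper's lemma), would make your proof complete.
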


Before proving the theorem, we show a general lemma. 
\begin{lemma}\label{prellem}
Let $(\omega,\psi)$ be an $\SU(3)$-structure. Then, for every vector field $X$ the following identity holds
\[
\iota_{\sst X}\psi\W\psi = -2*(\iota_{\sst X}\omega),
\]
where $*$ denotes the Hodge operator determined by the Riemannian metric $g$ and the orientation $dV_g=\frac{1}{6}\omega^3$. 
\end{lemma}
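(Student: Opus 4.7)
The plan is to exploit the fact that $\Psi = \psi + i\,\psim$ is a $(3,0)$-form with respect to the almost complex structure $J$ determined by $(\omega,\psi)$, and to work on the complexified tangent bundle.

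First, I compute $\Psi \wedge \overline{\Psi}$. Since $\psi$ and $\psim$ are real $3$-forms, $\psi\wedge\psi = \psim\wedge\psim = 0$ and $\psim\wedge\psi = -\psi\wedge\psim$, so using \eqref{normcond} together with $\omega^3 = 6\,dV_g$ gives
\[
\Psi \wedge \overline{\Psi} = -2i\,\psi\wedge\psim = -\tfrac{4}{3}i\,\omega^3 = -8i\,dV_g.
\]

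Next, I decompose a real vector $X$ as $X = X^{1,0} + X^{0,1}$ in $T^{\mathbb{C}}M$. Since $\Psi$ has type $(3,0)$, one has $\iota_X\Psi = \iota_{X^{1,0}}\Psi$ of type $(2,0)$ and, analogously, $\iota_X\overline{\Psi} = \iota_{X^{0,1}}\overline{\Psi}$ of type $(0,2)$. Writing $\psi = \tfrac12(\Psi+\overline{\Psi})$ and expanding $\iota_X\psi\wedge\psi$, the summands $\iota_{X^{1,0}}\Psi\wedge\Psi$ and $\iota_{X^{0,1}}\overline{\Psi}\wedge\overline{\Psi}$ vanish by bidegree (types $(5,0)$ and $(0,5)$), while the two remaining terms are complex conjugates. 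Hence
\[
\iota_X\psi\wedge\psi = \tfrac12\,\re\bigl(\iota_{X^{1,0}}\Psi\wedge\overline{\Psi}\bigr).
\]

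Finally, the Leibniz rule together with $\iota_{X^{1,0}}\overline{\Psi}=0$ yields $\iota_{X^{1,0}}\Psi\wedge\overline{\Psi} = \iota_{X^{1,0}}(\Psi\wedge\overline{\Psi}) = -8i\,\iota_{X^{1,0}}dV_g = -8i\,{*}(X^{1,0})^\flat$, invoking the identity $\iota_V dV_g = *V^\flat$ extended $\mathbb{C}$-linearly. Substituting $(X^{1,0})^\flat = \tfrac12\bigl(X^\flat - i(JX)^\flat\bigr)$ and taking real parts gives $\iota_X\psi\wedge\psi = -2\,{*}(JX)^\flat$; since the convention $\omega = g(J\cdot,\cdot)$ yields $\iota_X\omega = (JX)^\flat$, this is exactly the claim.

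The only real obstacle is careful bookkeeping of signs and complex factors; the argument itself is a straightforward consequence of the bidegree decomposition together with the normalization \eqref{normcond}. An alternative route would be to verify the identity at a single point in a standard $\SU(3)$-adapted orthonormal coframe (say for $X = e_1$) and then invoke $\SU(3)$-equivariance of both sides to conclude; the conceptual argument above seems cleaner.
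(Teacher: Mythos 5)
Your proof is correct, and it takes a route that differs in technique from the paper's, although the two arguments rest on the same ingredients (the $(3,0)$-type of $\Psi$, the normalization \eqref{normcond}, and the identity $\iota_V dV_g = *V^\flat$). The paper never complexifies: it starts from $\iota_X\Psi\wedge\Psi=0$ to extract the real relations $\iota_X\psi\wedge\psi=\iota_X\psim\wedge\psim$ and $\iota_X\psi\wedge\psim=-\iota_X\psim\wedge\psi$, then uses the real identities $\iota_X\psi=\iota_{JX}\psim$, $\iota_{JX}\psi=-\iota_X\psim$ and the Leibniz rule to arrive at $2\,\iota_X\psi\wedge\psi=\iota_{JX}(\psim\wedge\psi)$, after which the normalization gives the claim. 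You instead decompose $X=X^{1,0}+X^{0,1}$ and argue by bidegree, reducing everything to $\iota_X\psi\wedge\psi=\tfrac12\,\mathrm{Re}\bigl(\iota_{X^{1,0}}(\Psi\wedge\overline{\Psi})\bigr)$; note that expanding $\iota_{X^{1,0}}=\tfrac12(\iota_X-i\,\iota_{JX})$ and $\Psi\wedge\overline{\Psi}=-2i\,\psi\wedge\psim$ shows your key intermediate identity is literally the complexified form of the paper's $2\,\iota_X\psi\wedge\psi=\iota_{JX}(\psim\wedge\psi)$. What your version buys is transparency: the two vanishing terms are killed at once by the absence of $(5,0)$- and $(0,5)$-forms, and the sign bookkeeping is concentrated in the single computation $\Psi\wedge\overline{\Psi}=-8i\,dV_g$ together with $(X^{1,0})^\flat=\tfrac12(X^\flat-i(JX)^\flat)$, all of which you carried out correctly (including $\iota_X\omega=(JX)^\flat$ for the convention $\omega=g(J\cdot,\cdot)$). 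What the paper's version buys is that it stays entirely within real differential forms, requiring only the pointwise identities $\iota_X\psi=\iota_{JX}\psim$, $\iota_{JX}\psi=-\iota_X\psim$ rather than the complexified tangent bundle; your suggested alternative of checking the identity in an adapted coframe and invoking $\SU(3)$-equivariance would also work but is less informative.
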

\begin{proof}
From the equation $\iota_{\sst X}\Psi\W\Psi=0$, which holds for every vector field $X$, we have
\[
\iota_{\sst X}\psi\W\psi = \iota_{\sst X}\psim\W\psim,\quad \iota_{\sst X}\psi\W\psim = -\iota_{\sst X}\psim\W\psi.
\]
Using the above identities and the relations $\iota_{\sst X}\psi = \iota_{\sst JX}\psim$, $\iota_{\sst JX}\psi = -\iota_{\sst X}\psim$, we get
\begin{eqnarray*}
\iota_{\sst X}\psi\W\psi	&=&	\iota_{\sst JX}\psim\W\psi\\
					&=&	\iota_{\sst JX}(\psim\W\psi) +\psim\W\iota_{\sst JX}\psi\\
					&=&	\iota_{\sst JX}(\psim\W\psi) -\psim\W\iota_{\sst X}\psim\\
					&=& \iota_{\sst JX}(\psim\W\psi) -\psi\W\iota_{\sst X}\psi.
\end{eqnarray*}
Hence, $2\,\iota_{\sst X}\psi\W\psi = \iota_{\sst JX}(\psim\W\psi)$. 
Now, from condition \eqref{normcond} we know that $\psim\W\psi = -\frac23\,\omega^3= -4\,dV_g$. Thus, 
\[
\iota_{\sst X}\psi\W\psi = -2\,\iota_{\sst JX} dV_g =-2*(JX)^\flat = -2*(\iota_{\sst X}\omega).
\]
\end{proof}

\begin{proof}[Proof of Theorem \ref{MainThm}] \ 

Let $X\in\gg$. Then, using the closedness of $\omega$ we have $0=\mathcal{L}_{\sst X}\omega = d(\iota_{\sst X}\omega)$.
Moreover, since $d\psi=0$ and $\mathcal{L}_{\sst X}\psi=0$, then $d(\iota_{\sst X}\psi\W\psi)=0$ and Lemma \ref{prellem} implies that $d*(\iota_{\sst X}\omega)=0$. 
Hence, the 1-form $\iota_{\sst X}\omega$ is $\Delta_g$-harmonic and $\mathscr{F}$ coincides with the injective map $Z\mapsto\iota_{\sst Z}\omega$ 
restricted to $\gg$. From this \ref{thm1}) follows. 

In order to prove \ref{thm2}), we begin recalling that every Killing field on a compact manifold 
preserves every harmonic form. Consequently,  for all $X,Y\in\gg$ we have
\[
0 = \mathcal{L}_{\sst Y}(\iota_{\sst X}\omega) = \iota_{\sst [Y,X]}\omega + \iota_{\sst X}(\mathcal{L}_{\sst Y}\omega) =  \iota_{\sst [Y,X]}\omega.
\]
Since the map $Z\mapsto \iota_{\sst Z}\omega$ is injective, we obtain that $\frg$ is abelian. 
Now, $\G$ is compact abelian and it acts effectively on the compact manifold $M.$ Therefore, the principal isotropy is trivial and $\dim(\gg)\leq6$.  
When $\dim(\gg) = 6$, $M$ can be identified with the 6-torus $\mathbb{T}^6$ endowed with a left-invariant metric, which is automatically flat. 
Hence, if $(\omega,\psi)$ is strict symplectic half-flat, then $\dim(\gg)\leq5$. 

As for \ref{thm3}), we fix a point $p$ of $M$ and we observe that the image of the isotropy representation $\rho:\G_p\rightarrow \mathrm{O}(6)$
is conjugate into $\SU(3)$. Since $\SU(3)$ has rank two and $\G_p$ is abelian, the dimension of $\gg_p$ is at most two. 
If $\dim(\gg_p)=2$, then the image of $\rho$ is conjugate to a maximal torus of $\SU(3)$ and its fixed point set in $T_pM$ is trivial. 
As $T_p(\G\cdot p)\subseteq (T_pM)^{\G_p}$, the orbit $\G\cdot p$ is zero-dimensional, which implies that $\dim(\gg)=2$. 

Assertion \ref{thm4}) is equivalent to proving that $\G_p$ is trivial for every $p\in M$ whenever $\dim(\gg)\geq4$. 
In this case, $\dim(\gg_p)\leq1$ by \ref{thm3}), and therefore $\dim(\G\cdot p)\geq3$. 
If $\G_p$ contains a non-trivial element $h$, then $\rho(h)$ fixes every vector in $T_p(\G\cdot p)$ and, 
consequently, its fixed point set in $T_pM$ must be non-trivial of dimension at least three. 
On the other hand, a non-trivial element of $\SU(3)$ is easily seen to have a fixed point set of dimension at most two. This shows that $\G_p=\{1_{\sst \G}\}$. 
The last assertion follows immediately from \cite{Mos}.
\end{proof}

Point \ref{thm2}) in the above theorem gives a direct proof of a result obtained in \cite{PoRa}. 
\begin{corollary}
There are no compact homogeneous 6-manifolds endowed with an invariant strict symplectic half-flat structure. 
\end{corollary}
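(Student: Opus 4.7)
The plan is to argue by contradiction, using part \ref{thm2}) of Theorem \ref{MainThm} as the sole ingredient. Suppose $(M,\omega,\psi)$ is a compact six-dimensional homogeneous example with invariant strict symplectic half-flat structure. Homogeneity means that some Lie group $H$ acts transitively on $M$ by automorphisms of $(\omega,\psi)$; in particular $H\subseteq\Aut(M,\omega,\psi)$. Passing to the identity component $H^0$, its orbits have the same dimension as those of $H$, hence are open in $M$; since $M$ is connected, $H^0$ also acts transitively. Thus $H^0\subseteq\G=\Aut^{\sst0}(M,\omega,\psi)$ acts transitively, which forces $\dim(\G\cdot p)=\dim M=6$ for any $p\in M$, and in particular $\dim(\gg)\geq 6$.

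On the other hand, Theorem \ref{MainThm}\,\ref{thm2}) gives $\dim(\gg)\leq 5$ in the strict symplectic half-flat setting, a contradiction. This closes the argument. There is essentially no obstacle beyond the harmless observation about passing to $H^0$; the real content is already bundled inside the theorem, where the bound of $5$ (as opposed to $6$) was the decisive ingredient: the torus case $\dim(\gg)=6$ was ruled out precisely because the induced left-invariant metric on $\mathbb{T}^6$ is flat and therefore cannot support a strict (i.e.\ non-Calabi--Yau) symplectic half-flat structure. Once that step is in place, the corollary is immediate.
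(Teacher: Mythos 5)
Your proof is correct and is exactly the paper's intended argument: the paper derives the corollary directly from part \ref{thm2}) of Theorem \ref{MainThm}, since a transitive action by automorphisms would force $\dim(\gg)\geq 6$, contradicting the bound $\dim(\gg)\leq 5$. The remark about passing to the identity component is a harmless bookkeeping step, and your account of why the bound is $5$ rather than $6$ (the flat torus case cannot be strict) matches the proof of the theorem.
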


It is worth observing here that the non-compact case is less restrictive.  
For instance, it is possible to exhibit non-compact examples which are homogeneous under the action of a {\em semisimple} Lie group of automorphisms 
(see e.g.~\cite{PoRa}). Moreover, in the next section we shall construct non-compact examples of cohomogeneity one with respect to a semisimple Lie group of automorphisms. 

The next example was given in \cite{DeBTom0}. 
It shows that $\G$ can be non-trivial, that the upper bound on its dimension given in \ref{thm2}) can be attained, 
and that \ref{thm4}) is only a sufficient condition. 
\begin{example}\label{ExTorus}
On $\R^6$ with standard coordinates $(x^1,\ldots,x^6)$ consider three smooth functions $a(x^1)$, $b(x^2)$, $c(x^3)$ in such a way that
\[
\lambda_1 \coloneqq b(x^2)-c(x^3),\quad \lambda_2 \coloneqq c(x^3)-a(x^1),\quad \lambda_3 \coloneqq a(x^1)-b(x^2),
\]
are $\Z^6$-periodic. 
Then, the following pair of $\Z^6$-invariant differential forms on $\R^6$ induces an $\SU(3)$-structure on $\T^6 = \R^6/\Z^6$:
\[
\omega 	= dx^{14}+dx^{25}+dx^{36},\quad \psi = -e^{\lambda_3}\,dx^{126} +e^{\lambda_2}\,dx^{135}  -e^{\lambda_1}\,dx^{234} +dx^{456},
\]
where $dx^{ijk\cdots}$ is a shorthand for the wedge product $dx^i\W dx^j \W dx^k \W \cdots$. 
It is immediate to check that $(\omega,\psip)$ is strict symplectic half-flat whenever the functions $\lambda_i$ are not all constant. 
The automorphism group of $(\mathbb{T}^6,\omega,\psip)$ is $\mathbb{T}^3$ when $a(x^1)\,b(x^2)\, c(x^3) \not\equiv0$, 
while it becomes $\mathbb{T}^4$  ($\mathbb{T}^5$) when one (two) of them vanishes identically.  
\end{example}

Finally, we observe that there exist examples where the upper bound on the dimension of $\gg$ given in \ref{thm1}) is more restrictive than the upper bound given in \ref{thm2}). 
\begin{example}
In \cite{CoTo}, the authors obtained the classification of six-dimensional nilpotent Lie algebras admitting symplectic half-flat structures. 
The only two non-abelian cases are described up to isomorphism by the following structure equations
\[
(0,0,0,0,e^{12},e^{13}),\quad (0,0,0,e^{12},e^{13},e^{23}). 
\]

Denote by $\mathrm{N}$ the simply connected nilpotent Lie group corresponding to one of the above Lie algebras, and endow it with a left-invariant 
strict symplectic half-flat structure $(\omega,\psi)$.   
By \cite{Mal}, there exists a co-compact discrete subgroup $\Gamma\subset \mathrm{N}$ giving rise to a compact nilmanifold $\Gamma\backslash \mathrm{N}$.     
Moreover, the left-invariant pair $(\omega,\psi)$ on $\mathrm{N}$ passes to the quotient defining an $\SU(3)$-structure of the same type on $\Gamma\backslash \mathrm{N}$.  
By \cite{Nom}, we have that $b_1(\Gamma\backslash \mathrm{N})$ is either four or three.
\end{example}

\section{Non-compact cohomogeneity one examples}\label{sect3}
In this section, we construct complete examples of strict symplectic half-flat structures on a non-compact 6-manifold admitting a cohomogeneity one action of a 
semisimple Lie group of automorphisms.  
This points out the difference between the compact and the non-compact case, and together with the results in \cite[$\S$4.3]{PoRa} it suggests that the non-compact ambient provides 
a natural setting to obtain new examples.   

From now on, we consider the natural cohomogeneity one action on $M= T\mathbb{S}^3\cong\mathbb{S}^3\times\R^3$ 
induced by the transitive $\SO(4)$-action on $\mathbb{S}^3$. Then, we have
\[
T\mathbb{S}^3 \cong \SO(4)\times_{\sst \SO(3)}\R^3. 
\]

We refer the reader to \cite{AlAl,Mos,PoSp1,PoSp} for basic notions on cohomogeneity one isometric actions.  
Following the notation of \cite{PoSp}, we consider the Lie algebra $\frso(4) \cong \frsu(2)+\frsu(2)$ and we fix the following basis of $\frsu(2)$
\[
H	\coloneqq	 \frac{1}{2}\, \left(\begin{array}{cc}i& 0 \\ 0 & -i  \end{array}\right), 		\quad 
E	\coloneqq	\frac{1}{2\sqrt{2}}\, \left(\begin{array}{cc} 0 & 1\\  -1 & 0 \end{array}\right), 	\quad 
V	\coloneqq	\frac{1}{2\sqrt{2}}\,  \left(\begin{array}{cc}  0 & i \\ i&0\end{array}\right). 
\]

Let $\gamma:\R\rightarrow M$ be a normal geodesic such that $p\coloneqq\gamma(0)\in\mathbb{S}^3$ and $\gamma_t \coloneqq\gamma(t)$ is a regular point for all $t\neq0$. 
The singular isotropy subalgebra is $\frso(4)_p=\frsu(2)_{\sst \mathrm{diag}}$, while the principal isotropy subalgebra $\frk\coloneqq\frso(4)_{\sst \gamma_t}$, 
$t\neq0$, is one-dimensional and spanned by $(H,H).$ 
We consider the following basis of $\frso(4)\cong\frsu(2)+\frsu(2)$
\[
\renewcommand\arraystretch{1.4}
\begin{array}{crlc}
E_1 \coloneqq (E,0),	&	V_1 \coloneqq (V, 0),	& 	E_2 \coloneqq (0, E),	&	V_2 \coloneqq (0,V),\\
				& 	U \coloneqq (H, H), 	&	A \coloneqq (H, - H).		&
\end{array}
\renewcommand\arraystretch{1}
\]
We let $\xi\coloneqq \frac{\partial}{\partial t}$, and for any $Z\in\frso(4)$ we denote by $\widehat{Z}$ the corresponding fundamental vector field on $M.$ 
Then, a basis of $T_{\sst \gamma_t}M$ for $t\neq0$ is given by
\[
(\xi, \widehat A, \widehat E_1, \widehat V_1,  \widehat E_2, \widehat V_2)_{\sst\gamma_t}. 
\]
We shall denote the dual coframe along $\gamma_t$ by $(\xi^*, A^*, E_1^*, V^*_1, E^*_2, V^*_2)_{\sst\gamma_t}$, where $\xi^*\coloneqq dt$. 

Let $\K\subset\SO(4)$ be the principal isotropy subgroup corresponding to the Lie algebra $\frk$. 
The space of $\K$-invariant 2-forms on $T_{\sst\gamma_t}M$, $t\neq0,$ is spanned by 
\[
\omega_1 \coloneqq \xi^*\wedge A^*,\qquad \omega_2 \coloneqq E_1^*\wedge V_1^*,\qquad\ \omega_3 \coloneqq E_2^*\wedge V_2^*,
\]
\[
\omega_4 \coloneqq E_1^*\wedge E_2^* + V_1^*\wedge V_2^*,\qquad \omega_5 \coloneqq E_1^*\wedge V_2^*- V_1^*\wedge E_2^*.
\]
These forms extend as $\SO(4)$-invariant $2$-forms on the regular part $M_{\sst0}\coloneqq \mathbb{S}^3\times\R^{\sst+}.$ 
By \cite{PoSp}, their differentials along $\gamma_t$ are 
\begin{equation}\label{differentials} 
\renewcommand\arraystretch{1.4}
\begin{array}{lr}
d {\o_1}|_{\g_t} = \frac{1}{4}\,\xi^* \wedge  \left(\o_2 - \o_3\right),		&	d{\o_2}|_{\g_t} = d{\o_3}|_{\g_t} = 0\ , \\
d{\o_4}|_{\g_t} =  -2\, A^*\wedge \omega_5,					&	d{\omega_5}|_{\g_t} = 2\, A^*\wedge \omega_4.
\end{array}
\renewcommand\arraystretch{1}
\end{equation}

We now describe the general $\SO(4)$-invariant symplectic 2-form $\omega$ on $M.$ 
Along $\gamma_t$, $t\neq0$, we have
\[
\omega|_{\gamma_t} = \sum_{i=1}^5 f_i(t)\,\omega_i,
\]
for suitable smooth functions $f_i\in\mathcal{C}^\infty(\R^{\sst+})$. 
By \cite[Prop.~6.1]{PoSp}, the $\SO(4)$-invariant 2-form $\omega$ on $M_{\sst0}$ corresponding to $\omega|_{\gamma_t}$ admits a smooth extension to the whole $M$ if and only if 
the functions $f_i$ extend smoothly on $\R$ as follows:
\begin{enumerate}[i)]
\item\label{i} $f_1$ and $f_4$ are even and $f_2$, $f_3$, $f_5$ are odd;
\item\label{ii} $f_3'(0)=\frac12\,f_1(0)+f_2'(0)$, $f_5'(0)=-\frac14\,f_1(0)-f_2'(0)$, and $f_4(0)=0$.
\end{enumerate} 
Moreover, $\omega|_p$ is non-degenerate if and only if $f_1(0)\neq0$. 

Using \eqref{differentials}, we compute $d\omega$ and we see that $\omega$ is closed if and only if 
\[
f_4,f_5\equiv0,\quad 
\begin{cases}
f_2' = -\frac14\,f_1\\
f_3' = \frac14\,f_1
\end{cases}.
\]
Combining this with the extendability conditions, we obtain that every $\SO(4)$-invariant symplectic 2-form $\omega$ on $M$ can be written as
\begin{equation} \label{omega}
\omega|_{\sst\gamma_t} = f_1(t)\,\omega_1+ f_2(t)\,\omega_2+ f_3(t)\,\omega_3,\quad t\neq0,
\end{equation}
with $f_1\in\mathcal{C}^\infty(\R)$ even and nowhere vanishing, and 
\[
f_2(t) = -\frac14\int_0^t f_1(s)\,ds = -f_3(t). 
\]
Notice that $\omega^3|_{\gamma_t} = -6f_1f_2^2\, \o_1\W\o_2\W\o_3$ at every regular point of the geodesic $\gamma_t$. 
As $f_1$ is nowhere zero, we may assume that $f_1<0$, so that the volume form $\xi^* \W A^* \W E_1^* \W V^*_1 \W E^*_2 \W V^*_2$  
defines the same orientation on $T_{\sst\gamma_t}M$ as $\frac{1}{6}\,\omega^3|_{\sst\gamma_t}$ for all $t\in\R^{\sst+}$.  

We now search for an $\SO(4)$-invariant closed 3-form $\psi\in \Omega^3(M)^{\sst\SO(4)}$ so that the pair $(\omega,\psi)$ defines an $\SO(4)$-invariant symplectic half-flat 
structure on $M.$ For the sake of simplicity, we make the following Ansatz
\[
\psi=du,~u \in \Omega^2(M)^{\sst\SO(4)}.
\]
As before, along $\gamma_t$, $t\neq0$, we can write
\begin{equation}
\label{u}
u|_{\gamma_t} = \sum_{i=1}^5 \phi_i(t)\,\omega_i, 
\end{equation}
for some smooth functions $\phi_i\in\mathcal{C}^\infty(\R^{\sst+})$ satisfying the same extendability conditions as the $f_i$'s.  
Then, omitting the dependence on $t$ for brevity, we have
\begin{equation}\label{psiplus}
\psi|_{\gamma_t} = \psi_2\,\xi^*\W\omega_2 + \psi_3\,\xi^*\W\omega_3 +\phi_4'\,\xi^*\W\omega_4 + \phi_5'\,\xi^*\W\omega_5 +2\,A^*\W(\phi_5\,\omega_4-\phi_4\,\omega_5), 
\end{equation}
where $\psi_2\coloneqq \frac14\,\phi_1+\phi_2'$ and $\psi_3\coloneqq \phi_3'-\frac14\,\phi_1. $

By \cite{Hit}, the pair $(\omega,\psi)$ defines an $\SU(3)$-structure if and only if the following conditions hold:
\begin{enumerate}[a)]
\item\label{a} the compatibility condition $\omega\W\psi=0$;
\item\label{b} the stability condition $P(\psi)<0$, $P$ being the characteristic quartic polynomial defined on 3-forms (see below for the definition);
\item\label{c} denoted by $J$ the almost complex structure induced by $(\omega,\psi)$, then the complex volume form $\Psi\coloneqq\psi+i\,\psim$ 
with $\psim\coloneqq J\psi$ fulfills the normalization condition \eqref{normcond}; 
\item\label{d} the symmetric bilinear form $g\coloneqq\omega(\cdot,J\cdot)$ is positive definite. 
\end{enumerate} 

The compatibility condition \ref{a}) along $\gamma_t$ reads $f_2 \psi_3+f_3 \psi_2=0$. 
Since $f_2=-f_3\neq0$, this implies 
\begin{equation}\label{psi2}
\psi_2=\psi_3. 
\end{equation}

Recall that at each point $q\in M$ the 3-form $\psi$ gives rise to an endomorphism $S\in\End(T_qM)$ defined as follows for every $\theta\in T_q^*M$ and every $v\in T_qM$
\[
\iota_{\sst v}\psi\W\psi\W\theta= \theta(S(v))\frac{\omega^3}{6}. 
\]
The endomorphism $S$ satisfies $S^2=P(\psi)\mathrm{Id}$, and it gives rise to the almost complex structure $J\coloneqq \frac{1}{\sqrt{|P(\psi)|}}S$ when $P(\psi)<0$. 

From the expressions
\begin{eqnarray*}
\iota_{\sst\xi}\psi\wedge\psi|_{\sst\gamma_t} &=& 2 \left(\psi_2^2-(\phi_4')^2-(\phi_5')^2\right)\,\xi^*\wedge\o_2\wedge\o_3-4\left(\phi_4'\phi_5 - \phi_4\phi_5'\right)\,A^*\W\o_2\W\o_3,\\
\iota_{\sst\widehat A}\psi\wedge\psi|_{\sst\gamma_t} &=& 4\left(\phi_4\phi_5'-\phi_4'\phi_5\right)\,\xi^*\wedge\o_2\wedge\o_3-8\left(\phi_4^2+\phi_5^2\right)\,A^*\wedge\o_2\wedge\o_3, 
\end{eqnarray*}
we see that the endomorphism $S\in\End(T_{\sst\gamma_t}M)$ maps the subspace of $T_{\sst\gamma_t}M$ spanned by $\xi$ and $\widehat A|_{\sst\gamma_t}$ 
into itself with associated matrix given by 
\begin{equation}\label{S} -\frac{1}{f_1f_2^2} 
\left(
\renewcommand\arraystretch{1.4}
\begin{array}{cc} 
4\left(\phi_4'\phi_5 - \phi_4\phi_5'\right)			&	8 \left(\phi_4^2+\phi_5^2\right)\\ 
2\left(\psi_2^2 - (\phi_4')^2 - (\phi_5')^2\right)		&	- 4\left(\phi_4'\phi_5 - \phi_4\phi_5'\right) 
\end{array}
\renewcommand\arraystretch{1.4}
\right).
\end{equation}

Since the curve $\gamma_t$ must be a normal geodesic for the metric $g$ 
induced by $(\omega,\psi)$, it follows that the tangent vector $\xi$ is orthogonal to the orbit $\SO(4)\cdot\gamma_t$ at every regular point of $\gamma_t$. In particular, we have
\[
0=g(\xi,\widehat A) = \o(\xi,J(\widehat{A})) = \frac{1}{\sqrt{|P(\psi)|}}\,\o(\xi,S(\widehat{A})) = \frac{4}{f_2^2\,\sqrt{|P(\psi)|}}\,(\phi_4'\phi_5 - \phi_4\phi_5'),
\] 
from which we get
\begin{equation}\label{phi45}
\phi_4'\phi_5 = \phi_4\phi_5'.
\end{equation}
Using \eqref{psi2}, \eqref{phi45} and the definition of $P(\psi)$, we obtain 
\begin{equation}\label{P}
P(\psi) = \frac{16}{f_1^2f_2^4} \left(\phi_4^2+\phi_5^2\right)  \left(\psi_2^2 - (\phi_4')^2 - (\phi_5')^2\right). 
\end{equation}
Consequently, the stability condition \ref{b}) reads
\begin{equation}\label{stabilitycond}
\psi_2^2 - (\phi_4')^2 - (\phi_5')^2<0,\quad \phi_4^2+\phi_5^2 \neq 0,
\end{equation}
for all $t\in\R^{\sst+}$.

We now note that the vector field $J(\xi)$ is tangent to the $\SO(4)$-orbits and it belongs to the space of $\K$-fixed vectors in $T_{\sst\gamma_t}(\SO(4)\cdot\gamma_t)^\K$, 
which is spanned by $\widehat{A}|_{\gamma_t}$. 
Since the geodesic $\g_t$ has unit speed, we see that 
\begin{equation}\label{g11}
1 = g(\xi,\xi) = \o(\xi,J(\xi)) = -\frac{2}{f_2^2\sqrt{|P(\psi)|}} \left(\psi_2^2 - (\phi_4')^2 - (\phi_5')^2\right).
\end{equation}
Using \eqref{P}, the relation \eqref{g11} implies that 
\begin{equation} \label{Pg11}
4\left(\phi_4^2 + \phi_5^2\right) = f_1^2\left((\phi_4')^2 + (\phi_5')^2 - \psi_2^2\right).
\end{equation}     

Let us now focus on \ref{c}).  
From \eqref{S} and \eqref{Pg11}, we obtain $J(\xi) = \frac{1}{f_1}\hat A|_{\gamma_t}$. 
Using this and the identity $\psim = J\psi = -\psi(J\cdot,\cdot,\cdot)$, we have
\begin{equation}\label{hatpsi}
\psim|_{\gamma_t} = \xi^*\W\left(2\, \frac{\phi_4}{f_1}\, \o_5 - 2\, \frac{\phi_5}{f_1}\, \o_4\right) + f_1\, A^*\W\left(\psi_2\, (\o_2+ \o_3) + \phi_4'\, \o_4 + \phi_5'\, \o_5\right).
\end{equation} 
Now, the normalization condition $\psi\W\psim =\frac23\,\omega^3$ gives
\[
4(\phi_4^2+\phi_5^2)- f_1^2\ (\psi_2^2 - (\phi_4')^2 - (\phi_5')^2) = 2\,f_1^2f_2^2.
\]
Combining this with \eqref{Pg11}, we obtain 
\begin{equation}\label{normeq}
\phi_4^2+\phi_5^2 = \frac 14  (f_1 f_2)^2.
\end{equation} 
 Note that \eqref{P}, \eqref{Pg11} and \eqref{normeq} imply  
\[
P(\psi) \equiv -4
\]
along the geodesic $\gamma_t$. Thus, the stability of $\psi$ holds also at $t=0$. 

Going back to \eqref{phi45}, we see that either $\phi_4 = \lambda\phi_5$ or $\phi_5 = \l\phi_4$ for some $\l\in \mathbb R\smallsetminus\{0\}$. 
Since $\phi_4$ and $\phi_5$ extend as an even and an odd function on $\mathbb{R}$, respectively, we see that either $\phi_4\equiv 0$ or $\phi_5\equiv 0$. 
As $f_1f_2$ is an odd function on $\mathbb R$, \eqref{normeq} implies that 
\begin{equation} \label{phi5}
\phi_4\equiv 0,\quad \phi_5 = \pm\frac 12 f_1f_2.
\end{equation}  

The matrix associated with the symmetric bilinear form $\omega(\cdot,J\cdot)$ along $\gamma_t$, $t\in \mathbb R^+,$ is 
\[
\left( 
\renewcommand\arraystretch{1.3}
\begin{array}{cccccc} 
	1	& 0		& 0			& 0	& 0	& 0\\ \noalign{\medskip}
	0	& f_1^2	& 0			& 0	& 0	& 0\\ \noalign{\medskip}
	0	& 0		& -2\, \frac {\phi'_5 \phi_5}{f_1f_2}	& 0	& -2\, \frac { \psi_2 \phi_5}{f_1 f_2}	& 0\\ \noalign{\medskip}
	0	& 0		& 0			& -2\,\frac {\phi'_5 \phi_5}{f_1 f_2}	& 0	& -2\,\frac {\psi_2 \phi_5}{f_1 f_2}\\ \noalign{\medskip}
	0	& 0		& -2\, \frac {\psi_2 \phi_5}{f_1 f_2}	& 0	& -2\, \frac{\phi'_5 \phi_5}{f_1 f_2}	& 0\\ \noalign{\medskip}
	0	& 0		& 0			& -2\, \frac { \psi_2  \phi_5}{f_1 f_2}	& 0	& -2\, \frac {\phi'_5 \phi_5}{f_1 f_2}
\end{array}
\renewcommand\arraystretch{1}
\right), 
\]
and condition \ref{d}) can be written as 
\[
-2\, \frac {\phi'_5 \phi_5}{f_1f_2}> 0,\quad \psi_2^2 < (\phi_5')^2.
\] 
The former condition is equivalent to $(f_2^2)''>0$, while the latter is satisfied whenever $\psi$ is stable (cf.~\eqref{stabilitycond}).

Note that the metric $g$ extends smoothly over the singular orbit $\mathbb{S}^3$ to a Hermitian symmetric bilinear form. 
The restriction of $g$ on $T_p\mathbb{S}^3$ is positive definite as $g_p(\widehat{A},\widehat{A}) = f_1^2(0) > 0$ and the orbit $\SO(4)\cdot p$ is isotropy irreducible. 
Moreover, $T_pM=T_p\mathbb{S}^3\oplus J(T_p\mathbb{S}^3)$, and from this we see that $g_p$ is positive definite.

Summing up, the existence of a complete $\SO(4)$-invariant symplectic half-flat structure $(\omega,\psi)$ on $M$ is equivalent to the existence of 
a smooth function $f_1\in \mathcal{C}^\infty(\R)$ satisfying the following conditions: 
\begin{enumerate}[1)]
\item\label{1} $f_1$ is even and negative;
\item\label{2} the function $f_2(t)\coloneqq -\frac 14\int_0^t f_1(s) ds$ satisfies $(f_2^2)'' > 0$; 
\item\label{3} there exists an even smooth function $\psi_2\in\mathcal{C}^\infty(\R)$  satisfying $\psi_2^2 = [(f_2^2)'']^2 - f_2^2$.
\end{enumerate}
Indeed, given $f_1$ we define the symplectic form $\o$ on $M$ as in \eqref{omega}, with $f_3=-f_2$. 
As for $\psi$, we let $\psi_3 \coloneqq \psi_2$, $\phi_4\coloneqq0$, and $\phi_5 \coloneqq \pm\frac 12 f_1f_2$ in \eqref{psiplus}. 
Then, \eqref{Pg11} and \eqref{normeq} imply $\psi_2^2 = (\phi_5')^2 - f_2^2$, and we can choose the sign in the definition of $\phi_5$ so that the extendability condition 
$\phi_5'(0) = -\psi_2(0)$ given in \ref{ii}) is satisfied. 
It is also easy to see that we may choose $\phi_1,\phi_2,\phi_3$ so that $\psi_2= \frac14\,\phi_1+\phi_2'$ and $\psi_3= \phi_3'-\frac14\,\phi_1$,  
and the corresponding $u$ as in \eqref{u} extends to a global $2$-form on $M.$ 
The resulting $3$-form $\psi$ is then stable by condition \ref{3}) and \eqref{P}. 
The stability condition together with the inequality in \ref{2}) implies that the induced bilinear form $g$ is everywhere positive definite. 
Hence, we have proved the following result. 
\begin{proposition}
The existence of a complete $\SO(4)$-invariant symplectic half-flat structure $(\omega,\psi)$ on $T\mathbb{S}^3 = \SO(4)\times_{\sst\SO(3)}\R^3$ 
with $\psi \in d\Omega^2(M)$ is equivalent to the existence of 
a smooth function $f_1\in\mathcal{C}^\infty(\R)$ satisfying conditions \rm{\ref{1}), \ref{2}), \ref{3})}. 
\end{proposition}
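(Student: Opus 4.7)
The plan is to reduce everything to conditions on a single scalar function $f_1$ of the normal geodesic parameter $t$, by exploiting the fact that along the regular part $M_{\sst0} = \mathbb{S}^3 \times \R^{\sst+}$ any $\SO(4)$-invariant form is determined by its restriction to the $\K$-fixed subspace at $\gamma_t$. I would expand $\omega = \sum_{i=1}^5 f_i\,\omega_i$ and $u = \sum_{i=1}^5 \phi_i\,\omega_i$ in the basis of $\K$-invariant 2-forms, and then work through the closedness/stability/normalization/positivity conditions one at a time, keeping a careful eye on the extendability conditions \textrm{i)}, \textrm{ii)} at $t=0$.

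First I would impose $d\omega=0$, which by the differentials \eqref{differentials} forces $f_4\equiv f_5 \equiv 0$ and $f_3 = -f_2 = \tfrac14\int_0^t f_1(s)\,ds$, so that $\omega$ is parametrized by the single function $f_1$ (even and nowhere vanishing by \textrm{i)}, \textrm{ii)} and non-degeneracy, which allows us to take $f_1<0$ to match orientations). Writing $\psi = du$ along $\gamma_t$ produces formula \eqref{psiplus}, and I would then translate the four $\SU(3)$ conditions \textrm{a)}--\textrm{d)} into algebraic identities on $\psi_2,\psi_3,\phi_4,\phi_5$: compatibility gives $\psi_2=\psi_3$; the fact that $\xi$ is normal to the orbits along the geodesic gives $\phi_4'\phi_5 = \phi_4\phi_5'$; stability reduces via \eqref{P} to an open sign condition; and the normalization \eqref{normcond} yields $\phi_4^2+\phi_5^2 = \tfrac14(f_1f_2)^2$. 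Combining the orthogonality relation with the parity constraints in \textrm{i)} and the fact that $f_1f_2$ is odd, I would conclude $\phi_4\equiv 0$ and $\phi_5 = \pm\tfrac12 f_1f_2$, so $\psi$ is also determined by $f_1$ and by $\psi_2$ alone.

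At this stage conditions \textrm{\ref{2})} and \textrm{\ref{3})} emerge naturally: the positivity of the diagonal entries $-2\phi_5'\phi_5/(f_1f_2)$ of the matrix of $g=\omega(\cdot,J\cdot)$ becomes $(f_2^2)''>0$, and the remaining positivity condition $\psi_2^2<(\phi_5')^2$, after substituting $\phi_5 = \pm\tfrac12 f_1f_2$, becomes the solvability requirement $\psi_2^2 = [(f_2^2)'']^2 - f_2^2$ for an even smooth $\psi_2$. For the converse direction, given such an $f_1$ I would define $\omega$ by \eqref{omega}, take $\phi_5 = \pm\tfrac12 f_1f_2$ with the sign chosen so that $\phi_5'(0) = -\psi_2(0)$ (as required by \textrm{ii)}), and exhibit $\phi_1,\phi_2,\phi_3$ realizing $\psi_2 = \tfrac14\phi_1+\phi_2'$ and $\psi_3 = \phi_3'-\tfrac14\phi_1$; stability of the resulting $\psi$ everywhere is automatic from $P(\psi)\equiv -4$, and positive definiteness of $g$ on the singular orbit $\mathbb{S}^3$ follows from $f_1(0)\neq 0$ together with the isotropy irreducibility of $T_p\mathbb{S}^3$ and the splitting $T_pM = T_p\mathbb{S}^3 \oplus J(T_p\mathbb{S}^3)$.

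The main obstacle I anticipate is precisely the smooth extension across the singular orbit at $t=0$: the individual forms $\omega_i$ and the coefficients $\phi_i$ do not extend on their own, and one must check that the particular combinations arising here satisfy the parity and boundary conditions \textrm{i)}--\textrm{ii)}. A subtler point is that the orthogonality relation $\phi_4'\phi_5 = \phi_4\phi_5'$ together with the smoothness across $0$ does not a priori force one of $\phi_4,\phi_5$ to vanish identically — one has to use the fact that a nonzero ratio would be constant on each half-line and then invoke parity to rule out the case $\phi_5\equiv 0$ via $\phi_4^2+\phi_5^2 = \tfrac14(f_1f_2)^2$ with $f_1f_2$ odd. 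Once this rigidity is in place, the rest of the equivalence is a matter of carefully unwinding the definitions and checking the extendability of the chosen $u$.
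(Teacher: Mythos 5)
Your proposal is correct and follows essentially the same route as the paper: the same expansion of $\omega$ and $u$ in the invariant forms $\omega_1,\dots,\omega_5$, the same reduction via closedness, compatibility, the normal-geodesic and unit-speed relations, normalization and stability to $\phi_4\equiv0$, $\phi_5=\pm\tfrac12 f_1f_2$, and the same translation of positivity into conditions \ref{2}) and \ref{3}), together with the same converse construction and extension argument across the singular orbit. The only minor slip is attributing the identity $\psi_2^2=[(f_2^2)'']^2-f_2^2$ to the positivity condition $\psi_2^2<(\phi_5')^2$: in the paper it comes from combining \eqref{Pg11} with \eqref{normeq} (i.e.\ the unit-speed and normalization relations), and the strict inequality is then automatic away from $t=0$.
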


Recall that the symplectic half-flat structure $(\omega,\psi)$ is strict if and only if 
the unique 2-form $\sigma\in\left[\Omega^{\sst1,1}_{\sst0}(M) \right]$ fulfilling $d\psim=\sigma\W\omega$ is not identically zero. 
Starting from \eqref{hatpsi}, using \eqref{differentials} and the identity $dA^*|_{\sst\gamma_t} = \frac14\left(\omega_3-\omega_2\right)$ 
(cf.~\cite[(3.27)]{PoSp}), we obtain
\[
d\psim|_{\sst \gamma_t} = \left(\left(f_1 \phi_5'\right)'-4\,\frac{\phi_5}{f_1} \right)\,\omega_1\W\omega_5 + \left(f_1 \psi_2\right)'\,\omega_1\W(\omega_2+\omega_3), 
\]
whence 
\[
\sigma|_{\sst\gamma_t} = \frac{1}{f_1}\left(f_1\psi_2\right)'(\omega_2+\omega_3) + \frac{1}{f_1}\left(\left(f_1 \phi_5'\right)'-4\,\frac{\phi_5}{f_1} \right) \omega_5.
\]  

By \cite{BedVez}, we know that the scalar curvature of the metric $g$ induced by a symplectic half-flat structure is given by $\mathrm{Scal}(g) = -\frac12|\sigma|^2$. 
Hence, in our case we have
\[
\mathrm{Scal}(g)|_{\sst\gamma_t}	= -\frac{1}{f_1^2f_2^2}\left[ \left((f_1\psi_2)'\right)^2 - \left(\left(f_1 \phi_5'\right)'-4\,\frac{\phi_5}{f_1} \right)^2 \right]
							= -\left( \frac{\left(f_1\psi_2\right)'}{f_1\phi_5'} \right)^2,
\]
where the second equality follows from the relations obtained so far. 

We may construct plenty of complete $\SO(4)$-invariant strict symplectic half-flat structures on $M$ by choosing a suitable $f_1$ as above. For instance, the function 
\[
f_1(t) \coloneqq  -\cosh(t),\quad t\in\R,
\] 
fits in with conditions \ref{1}), \ref{2}), \ref{3}). With this choice, the scalar curvature is
\[
\mathrm{Scal}(g)|_{\sst\gamma_t} = -\tanh^2(t) \frac{   ( 6 \cosh^2(t) - 5 )^2 }{4 \cosh^4(t) -8\cosh^2(t) +5   }.
\]
This shows that the resulting symplectic half-flat structure is strict and non-homogeneous. 

Note that the vanishing of $\sigma$ is equivalent to the vanishing of $\mathrm{Scal}(g)$. 
Hence, setting $(f_1\psi_2)'=0$, the resulting $\SU(3)$-structure $(\omega,\psi)$ is Calabi-Yau and the associated metric is the well-known Stenzel's Ricci-flat metric 
on $T\mathbb{S}^3$ (cf.~\cite{Ste}). 

Finally, we remark that the scalar curvature always vanishes at $t=0$. Indeed, $(f_1\psi_2)'(0)=0$, as $f_1\psi_2$ is even, while $f_1(0)\phi_5'(0) \neq0$. 
This implies that an $\SO(4)$-invariant symplectic half-flat structure $(\omega,\psi)$ with $\psi$ exact has constant scalar curvature if and only if it is Calabi-Yau.

\bigskip

\end{document}